\def\R{{\mathbb R}} 
\def\N{{\mathbb N}} 
\def\W{{\mathcal W}} 
\def\N{\mathbb N}
\def\W{\mathcal W}
\def\bb1{\mathds 1}
\theoremstyle{plain}
\newtheorem{theorem}{Theorem}[section]
\newtheorem{proposition}[theorem]{Proposition}
\newtheorem{lemma}[theorem]{Lemma}
\newtheorem{corollary}[theorem]{Corollary}
\theoremstyle{definition}
\newtheorem{definition}[theorem]{Definition}
\theoremstyle{remark}
\newtheorem{remark}[theorem]{Remark}
\newtheorem{question}{Question}
\numberwithin{subcase}{case}
\title{Cut norm discontinuity of triangular truncation of graphons}
\author{Teddy Mishura}
\date{\today}
\begin{document}
\maketitle

\abstract{The space of $L^p$ graphons, symmetric measurable functions $w: [0,1]^2 \to \R$ with finite $p$-norm, features heavily in the study of sparse graph limit theory. We show that the triangular cut operator $M_{\chi}$ acting on this space is not continuous with respect to the cut norm. This is achieved by showing that as $n\to \infty$, the norm of the triangular truncation operator $\mathcal{T}_n$ on symmetric matrices equipped with the cut norm grows to infinity as well. Due to the density of symmetric matrices in the space of $L^p$ graphons, the norm growth of $\mathcal{T}_n$ generalizes to the unboundedness of $M_{\chi}$. We also show that the norm of $\mathcal{T}_n$ grows to infinity on symmetric matrices equipped with the operator norm.}
\section{Introduction}
The theory of graph limits, introduced in \cite{LOVASZ2006933}, is based upon the idea that a series of convergent (dense) graphs will have limit objects in the form of symmetric measurable functions from $[0,1]^2$ to $[0,1]$. These limit objects are known as graphons and play a large role in modeling and analyzing large networks, showing up in fields such as large deviations \cite{chatterjee2011,lubetzky2015}, mathematical statistics \cite{abbe2017,borgs2012}, and random graph theory \cite{bollosbas2007,diaconis2008}. In general, it is more helpful to consider the adjacency matrices of the graphs that converge to a graphon instead, and the tool of choice to use with these objects is the cut norm, defined in \cite{friezekannan}. The cut norm can also be defined for functions in general (see \cite[Definition 8.13]{lovaszbook}), and in fact convergence in cut norm was shown in \cite{LOVASZ2006933} to be the correct way to view convergence of graph sequences. When the edge density of a sequence of graphs tends to 0, dense graph limit theory states that the limit object must be the zero graphon. However, many interesting structures in applications are derived from graphs whose edge density becomes 0, such as regimes governed by a power law degree distribution. Borgs et~al. developed a sparse graph theory in \cite{Borgs_2019,Borgs_2018} where they consider graphons that live in $L^p([0,1]^2)$, whose space we denote $\mathcal{W}^p$. The cut norm remains the proper way to consider convergence of these sequences.

An interesting subclass of graphons are the Robinson graphons, symmetric functions $w$ on the unit square such that for $0 \leq x \leq y \leq z \leq 1$, 
\begin{equation*}
    w(x,z) \leq \min(w(x,y),w(y,z)).
\end{equation*}
Such functions were defined as ``diagonally increasing'' in \cite{Chuangpishit_2015}, as they increase if you move toward the diagonal along a horizontal or vertical line. These functions are generalizations of Robinson matrices, sometimes called R-matrices, which appear in the classic problem of seriation (see \cite{Liiv_2010} for a comprehensive review of seriation) and whose study is a field of much interest \cite{chepoi2009,flammarion2016optimal,fogel2014,fortin2014,laurent2017,}. It can be easily seen that being Robinson is a strict property; it is often the case that a matrix of interest is ``almost'' Robinson except for relatively few entries. However, such a matrix can be viewed as being sampled from some underlying Robinson graphon, allowing approximate results to be generated via analysis of the graphon instead of the matrix. A prevalent issue for such approximations is measuring how ``close'' a matrix (or graphon) is to being Robinson to inform how accurate the results from the prior analysis must be.

One's first thought for this problem is to simply compare every entry in the matrix with its preceding neighbors and add up the ``errors''; clearly, you will only be Robinson if you end up with $0$. In \cite{janssen2019optimization}, they defined a parameter $\Gamma_1$ that does this, and while $\Gamma_1$ is easy to compute, it fails to be continuous in cut norm
(or equivalently the graph limit topology). Thus $\Gamma_1$ is not a suitable Robinson measurement for growing networks as it does not respect limits of graph sequences. In \cite{Chuangpishit_2015}, the authors defined a new parameter $\Gamma$ which is $0$ if its argument is Robinson; they also showed it is continuous with respect to the cut norm, providing a tool to measure Robinson resemblance of large graphs. However, this is only for the dense case; their proof of continuity for $\Gamma$ relies upon the fact that the operator multiplying a graphon by 1 above the main diagonal and by 0 below the main diagonal is bounded in cut norm for graphons in $\W^{\infty}$. However, they were unable to show if this method of proof could be extended to the sparse case, as it was unknown if that operator was bounded in cut norm for graphons in $\W^p$. To answer this question, we can look toward matrices for inspiration, seeing how a similar ``triangular cut'' would behave for them in cut norm. As symmetric matrices form a dense subspace of graphons, we can prove results in the world of matrices to then make statements in the world of graphons.

In this paper, we study the norm of the triangular cut on matrices with respect to the cut norm. It is not unusual to see the cut norm appear in a study of operators; while the cut norm arose from the graph theoretical problem of finding the maximal ``cut'' of a given weighted graph, it has been of much use in more analytical pursuits, such as approximation results for graphons \cite{lovasz2007} and algorithmic approximations for matrix problems \cite{alon2004,friezekannan}. Consider an $n \times n$ matrix and let the operator $\mathcal{T}_n$ set all entries of that matrix below the main diagonal to 0. $\mathcal{T}_n$ is called the triangular cut (also known as triangular truncation), and its norm growth is a well-studied problem in operator theory; see for example \cite{ANGELOS1992117,Zhou_2016} for explicit calculations and bounds for $\mathcal{T}_n$ applied to matrices equipped with operator norm. 

Outside of the cut norm, it is well known that the operator norm of $\mathcal{T}_n$ grows to infinity when it is considered as an operator on real matrices equipped with the standard operator norm $\|\cdot\|_{\text{opr}}$ (see \cite[Theorem 1]{ANGELOS1992117} for a proof and \cite{Watson1995} for estimations of growth speed). In \cite{bennett1976}, Bennett showed that when $1<p<q<\infty$, the triangular cut mapping $\ell^p$ to $\ell^q$ is bounded.  Recently, Coine used the canonical characterization of Schur multipliers -- an operator that multiplies one matrix element-wise with another matrix -- to prove that the triangular cut mapping $\ell^p$ to $\ell^q$ is unbounded when $1\leq q \leq p\leq\infty$ \cite{coine}. 

However, to make conclusive statements about graphons we must know how the norm of $\mathcal{T}_n$ grows on symmetric matrices. Existing literature lacked any such analysis, with respect to the cut norm or otherwise, and so we address this issue here. To do so, we view $\mathcal{T}_n$ as a Schur multiplier and further note that the cut norm is equivalent to an injective tensor norm. This allows us to make use of some bounds and techniques from \cite{bennett1977} to show that the norm of $\mathcal{T}_n$ on symmetric matrices grows to infinity. We refer to \cite{bennett1977,coine,davidson} for results on the norms of Schur multipliers in general. Due to the author's interest, we also present a proof of the unboundedness of $\mathcal{T}_n$ on symmetric matrices with respect to the standard operator norm.
\section{Notation and background}\label{appendix1}
In this section we shall provide the necessary background needed to understand the graph limit theory and matrix norms used throughout the paper, as well as provide proofs for several inequalities that feature here. 

We begin with a quick overview of the theory of $L^p$ sparse graph limits, developed in \cite{Borgs_2019,Borgs_2018}. Dense graph limit theory is concerned primarily with graphons and kernels, as defined in \cite[Chapter 7]{lovaszbook}- graphons being symmetric measurable functions from $[0,1]^2 \to [0,1]$ and kernels being linear combinations of graphons. The space of graphons is denoted $\W_0$. In the sparse theory, we shall work with the generalization of kernels known as $L^p$ graphons, as sparse limit objects do not need to be bounded.
\begin{definition}[$L^p$ graphons]\label{def:lpgraphons}
Let $1\leq p \leq \infty$. An $L^p$ \textit{graphon} is a symmetric, measurable function $w \in L^p([0,1]^2)$. We refer to the space of $L^p$ graphons as $\W^p$. In the case $p=\infty$, the space $\W^\infty$ is just the usual kernels as defined above. 
\end{definition}
For each $p$, $\W^p$ is a linear space, and by H\"older's inequality, for $p < q$, we have $\W^q \subseteq \W^p$.  
We equip $\W^p$ with the cut norm, denoted by $\|\cdot\|_{\Box}$, as defined below.
For every $w\in \W^1$, 
\begin{equation}\label{def:cut-norm}
    \|w\|_{\square} = \sup_{S,T \subseteq [0,1]}\Bigg|\int_{S \times T}w(x,y)dxdy\Bigg|,
\end{equation}
where the supremum is taken over all measurable subsets $S$ and $T$. 
Clearly, $\|\cdot\|_\Box\leq \|\cdot\|_1$, but the two norms are not equivalent. Every $w\in \W^1$ may be viewed as the kernel of an integral operator $T_w:L^\infty[0,1]\to L^1[0,1]$ defined as $T_w(f)(x)=\int_{[0,1]} w(x,y)f(y) dy$, and the two norms $\|w\|_\Box$ and 
$\|T_w\|_{\text{opr}}$ are equivalent (see \cite[Lemma 8.11]{lovaszbook}). 
From the functional analytic point of view, the norm $\|T_w\|_{\text{opr}}$ is just the injective tensor norm in 
$L^1[0,1]\check{\otimes} L^1[0,1]$. 
Now we shall define the matrix norms used throughout the paper as well as provide a proof of several important properties these norms possess.
\begin{definition}\label{def:matrixnorm}
Let $A$ be an $n \times n$ matrix and let $[n]=\{1,...,n\}$. We denote the \textit{cut norm} of $A$, introduced in \cite{friezekannan}, by 
\begin{equation*}
    \|A\|_{\Box} = \frac{1}{n^2}\Bigg|\sup_{A,B \subseteq [n]}\sum_{\substack{i \in A \\ j \in B}} a_{ij}\Bigg|,
\end{equation*}
and the $(p,q)$-norm of $A$ \cite[Equation 1]{bennett1977} by
\begin{equation*}
    \|A\|_{\mathcal{B}(\ell^p,\ell^q)} = \sup_{\|x\|_p \leq 1} \|Ax\|_q,
\end{equation*}
where $\|\cdot\|_p$ is the standard $p$-norm. We use the more conventional notation $\|A\|_{\text{opr}}$ to denote the $(2,2)$-norm, which is simply called the operator norm of $A$.
\end{definition}

As our proof of Proposition \ref{prop:unboundedtn} relies heavily on the relation between these two norms and tensor products of matrices, which we denote with the symbol $\otimes$, we state and make use of the following properties.
\begin{proposition} \label{prop:normineq}
For $n \times n$ matrices $A,B$, we have
\begin{enumerate}[label=(\alph*)]
\item \label{lem:cutnormopnorm} $n\|A\|_{\Box} \leq \|A\|_{\text{opr}}$. 
\item\label{prop:normineq1} $n^2\|A\|_{\Box} \leq \|A\|_{\mathcal{B}(\ell^\infty,\ell^1)} \leq 4n^2\|A\|_{\Box}$.
\item\label{prop:normineq2} $\|A\|_{\mathcal{B}(\ell^\infty,\ell^1)}\|B\|_{\mathcal{B}(\ell^\infty,\ell^1)} \leq \|A \otimes B\|_{\mathcal{B}(\ell^\infty,\ell^1)} \leq \frac{\pi}{2}K_G^2\|A\|_{\mathcal{B}(\ell^\infty,\ell^1)}\|B\|_{\mathcal{B}(\ell^\infty,\ell^1)}$, where $1.676 < K_G < 1.782$ is the Grothendieck constant. 
\end{enumerate}
\end{proposition}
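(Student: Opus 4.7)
The plan is to prove parts (a), (b), and (c) in sequence, with the upper bound in (c) being the main technical challenge.

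For part (a), the key observation is that for any $S, T \subseteq [n]$, the cut-norm sum is a bilinear form $\sum_{i \in S, j \in T} a_{ij} = \mathbf{1}_S^T A \mathbf{1}_T$, where $\mathbf{1}_S, \mathbf{1}_T$ denote indicator vectors. Cauchy--Schwarz gives $|\mathbf{1}_S^T A \mathbf{1}_T| \leq \|\mathbf{1}_S\|_2 \, \|A\|_{\text{opr}} \, \|\mathbf{1}_T\|_2 \leq n \|A\|_{\text{opr}}$, and taking the supremum over $S,T$ before dividing by $n^2$ yields $n\|A\|_\Box \leq \|A\|_{\text{opr}}$.

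For part (b), by the duality $(\ell^1)^* = \ell^\infty$ we have the representation $\|A\|_{\mathcal{B}(\ell^\infty, \ell^1)} = \sup\{|y^T A x| : \|x\|_\infty, \|y\|_\infty \leq 1\}$. The lower bound is immediate upon restricting $x$ and $y$ to be $\{0,1\}$-valued. For the upper bound, given arbitrary $x, y$ with $\|x\|_\infty, \|y\|_\infty \leq 1$, decompose $x = x^+ - x^-$ and $y = y^+ - y^-$ with $x^\pm, y^\pm \in [0,1]^n$, which expands $y^T A x$ into four terms. To bound each term $\tilde y^T A \tilde x$ with $\tilde x, \tilde y \in [0,1]^n$, sample random subsets $S, T \subseteq [n]$ by independently including $i \in S$ with probability $\tilde y_i$ and $j \in T$ with probability $\tilde x_j$. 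Then $\mathbb{E}[\mathbf{1}_S^T A \mathbf{1}_T] = \tilde y^T A \tilde x$, so Jensen's inequality gives $|\tilde y^T A \tilde x| \leq \mathbb{E}|\mathbf{1}_S^T A \mathbf{1}_T| \leq n^2 \|A\|_\Box$. Summing the four terms gives $\|A\|_{\mathcal{B}(\ell^\infty, \ell^1)} \leq 4 n^2 \|A\|_\Box$.

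For part (c), the lower bound is straightforward: choose near-optimal test vectors $x_A, y_A$ for $\|A\|_{\mathcal{B}(\ell^\infty, \ell^1)}$ and $x_B, y_B$ for $\|B\|_{\mathcal{B}(\ell^\infty, \ell^1)}$, and observe that $x_A \otimes x_B$ and $y_A \otimes y_B$ remain in the $\ell^\infty$ unit ball, while $(y_A \otimes y_B)^T (A \otimes B)(x_A \otimes x_B) = (y_A^T A x_A)(y_B^T B x_B)$ recovers the product of the two norms.

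The upper bound in (c) is the main obstacle, and I would prove it using Grothendieck's inequality in its factorization form, following Bennett \cite{bennett1977}. Reshape arbitrary test vectors $x, y$ in $\ell^\infty_{n^2}$ as $n \times n$ matrices with entries in $[-1, 1]$, and rewrite $y^T (A \otimes B) x = \sum_{i,j} a_{ij} \, u_i^T B v_j$, where $u_i, v_j$ are rows of the reshaped matrices with $\ell^\infty$ norm at most $1$. The Pietsch--Grothendieck factorization of $B$ produces probability weightings and a reweighted matrix $\hat B$ so that each bilinear value $u_i^T B v_j$ takes the form of a Hilbert inner product $\langle U_i, V_j\rangle$ with vector norms controlled by $K_G^{1/2}\|B\|_{\mathcal{B}(\ell^\infty, \ell^1)}^{1/2}$. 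A second application of Grothendieck's inequality to $A$ then bounds $|\sum_{i,j} a_{ij} \langle U_i, V_j\rangle|$ by $K_G \|A\|_{\mathcal{B}(\ell^\infty, \ell^1)} \sup_i \|U_i\| \sup_j \|V_j\|$, yielding the factor $K_G^2 \|A\|_{\mathcal{B}(\ell^\infty, \ell^1)} \|B\|_{\mathcal{B}(\ell^\infty, \ell^1)}$. The additional $\pi/2$ constant enters through the little Grothendieck inequality used to compare the $\ell^\infty$-bounded inputs to the Hilbert-space estimates, and the delicate part of the argument is accounting for all these constants precisely in the cascade of two factorizations.
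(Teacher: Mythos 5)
Your proposal is correct and follows essentially the same route as the paper: part (b) uses the identical decomposition of $[-1,1]$-valued test vectors into differences of $[0,1]$-valued ones yielding four terms (you justify the passage from $[0,1]$-valued vectors to indicator vectors by probabilistic rounding where the paper cites Lemma 8.10 of \cite{lovaszbook}), part (a) is a clean Cauchy--Schwarz argument in place of the paper's citation of \cite{nikiforov2009cutnorms}, and the lower bound in (c) is the standard tensoring of near-optimal test vectors. The one piece you leave as a sketch --- the upper bound in (c), via Grothendieck factorization with the attendant constant bookkeeping --- is exactly the piece the paper also does not prove, deferring to Proposition 10.2 of \cite{bennett1977}, so no substantive gap separates your argument from the paper's.
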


\begin{proof}
A concise proof of \ref{lem:cutnormopnorm} can be found in \cite{nikiforov2009cutnorms}. The upper bound of \ref{prop:normineq2} follows from Proposition 10.2 of \cite{bennett1977} while the lower bound is known. \ref{prop:normineq1} is a discretization of Lemma 8.11 in \cite{lovaszbook}; this adds a factor of $n^2$, as seen below.
\begin{equation*}
    \|A\|_{\mathcal{B}(\ell^\infty,\ell^1)} = \sup_{\|x\|_{\infty} \leq 1} \|Ax\|_1 = \sup_{\|x\|_{\infty}\leq 1}\sum_{i = 1}^n\Big|\sum_{j=1}^n a_{ij}x_j\Big| = \sup_{\|x\|_{\infty},\|y\|_{\infty}\leq 1}\sum_{i = 1}^n\sum_{j=1}^n a_{ij}x_jy_i,
\end{equation*}
and we refer to \cite[Lemma 8.10]{lovaszbook} for an equivalent definition of the cut norm of $A$: 
$$\displaystyle \|A\|_{\Box} = \frac{1}{n^2}\sup_{0 \leq x_i,y_j \leq 1} \Big|\sum_{i,j}a_{ij}x_iy_j\Big|,$$ 
showing that the lower bound holds. For the upper bound, we rewrite $\|A\|_{\mathcal{B}(\ell^\infty,\ell^1)}$ in the following way:
\begin{align*}
    \|A\|_{\mathcal{B}(\ell^\infty,\ell^1)} &= \sup_{0 \leq x_i,z_i,y_j,w_j \leq 1} \Big|\sum_{i,j}a_{ij}(x_i-z_i)(y_j-w_j)\Big| \\
    &\leq \sup_{0 \leq x_i,z_i,y_j,w_j \leq 1} \Big|\sum_{i,j}a_{ij}x_iy_j\Big|+\Big|\sum_{i,j}a_{ij}x_iw_j\Big|+\Big|\sum_{i,j}a_{ij}z_iy_j\Big|+\Big|\sum_{i,j}a_{ij}z_iw_j\Big| \\
    &= 4n^2\|A\|_{\Box},
\end{align*}
proving the original claim. 
\end{proof}
\section{The triangular cut}\label{section:triangle}
In this section, we show that the linear operator $M_{\chi}$ defined by the triangular cut on graphons is not continuous (i.e.~not bounded). Prefacing our result, we need the following notations and definitions.
\begin{definition}[Triangular cut on matrices]\label{def:trian-matrix}
Let $M_n({\mathbb R})$ denote the space of real matrices of size $n$, and $\mathcal{S}_n(\mathbb R)$ denote the subspace of $M_n(\mathbb R)$ consisting of symmetric matrices. Define $T_n\in M_n(\mathbb R)$ as 
\begin{equation}\label{eq:trimat}
(T_n)_{ij} = \begin{cases}
                1 & i \geq j \\
                0 & i < j
            \end{cases},
\end{equation}
and define the triangular cut for matrices as 
\begin{equation}\label{eq:triopr-n}
\mathcal{T}_n:  M_n(\mathbb R)\rightarrow  M_n(\mathbb R), \quad A \mapsto A \circ T_n,
\end{equation}
where $\circ$ represents Schur multiplication (i.e.~entrywise multiplication).
\end{definition}
\begin{definition}[Triangular cut on graphons]\label{def:trian-graphon}
Let $\chi:[0,1]^2\to [0,1]$ be defined as
\begin{equation}
    \chi(x,y) = \begin{cases}
                1 & x \leq y \\
                0 & x > y
            \end{cases},
\end{equation}
and define the triangular cut on graphons as
\begin{equation}\label{eq:mchi}
    M_{\chi} : \W^p \to L^p([0,1]^2), \quad w \mapsto w\chi.
\end{equation}
\end{definition}
It is of note that $\|M_{\chi}\|_{\text{opr}} \geq \|\mathcal{T}_n\|_{\text{opr}}$ for all $n$, as for any matrix $M \in M_n(\R)$, $\|\mathcal{T}_n(M)\|_{\Box} \leq \|M_{\chi}(w_M)\|_{\Box}$, where $w_M$ is an $n \times n$ step function whose entries are identical to $M$ but doubled on the diagonal. We now state and prove our main result, noting that Proposition \ref{prop:unboundedtn} (iii) is included for sake of completion, as the author could not find a similar result in the literature.
\begin{proposition}\label{prop:unboundedtn}
For $\mathcal{T}_n$ and $T_n$ as in Definition \ref{def:trian-matrix}, we have:
\begin{enumerate}
    \item[(i)] $\sup_{A \in M_n({\mathbb R})} \dfrac{\|\mathcal{T}_n(A)\|_{\Box}}{\|A\|_{\Box}} \to \infty$ as $n \to \infty$.
    
    \item[(ii)] $\sup_{A \in \mathcal{S}_n({\mathbb R})} \dfrac{\|\mathcal{T}_n(A)\|_{\Box}}{\|A\|_{\Box}} \to \infty$ as $n \to \infty$.
    
    \item[(iii)] $\sup_{A \in \mathcal{S}_n({\mathbb R})} \dfrac{\|\mathcal{T}_n(A)\|_{\text{opr}}}{\|A\|_{\text{opr}}} \to \infty$ as $n \to \infty$.
\end{enumerate}
\end{proposition}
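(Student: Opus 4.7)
My plan is to view $\mathcal{T}_n$ as Schur multiplication by the matrix $T_n$ and, for parts~(i) and~(ii), to use Proposition~\ref{prop:normineq}(b) to replace the cut norm by the $(\infty,1)$-norm up to a constant factor. The amplification engine is Proposition~\ref{prop:normineq}(c), which makes the $(\infty,1)$-norm almost multiplicative under tensor products of matrices.

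For part~(i), I would exhibit a single seed matrix $A_0\in M_{n_0}(\R)$ whose ratio
\[
c\;:=\;\frac{\|\mathcal{T}_{n_0}(A_0)\|_{\mathcal{B}(\ell^\infty,\ell^1)}}{\|A_0\|_{\mathcal{B}(\ell^\infty,\ell^1)}}
\]
exceeds the tensor overhead $\tfrac{\pi}{2} K_G^2$. Because Schur multiplication distributes over Kronecker tensor products, $T_{n_0}^{\otimes k}\circ A_0^{\otimes k}=(T_{n_0}\circ A_0)^{\otimes k}$, and iterating Proposition~\ref{prop:normineq}(c) yields
\[
\frac{\|T_{n_0}^{\otimes k}\circ A_0^{\otimes k}\|_{\mathcal{B}(\ell^\infty,\ell^1)}}{\|A_0^{\otimes k}\|_{\mathcal{B}(\ell^\infty,\ell^1)}}\;\geq\;\frac{c^k}{(\pi K_G^2/2)^{k-1}}\;\to\;\infty,
\]
and Proposition~\ref{prop:normineq}(b) then translates this into unboundedness of the cut-norm ratio on $M_{n_0^k}(\R)$. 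A subtle technical point is that $T_{n_0}^{\otimes k}$ is the Schur multiplier for the componentwise partial order on $[n_0]^k$ and is not literally equal to the triangular-cut mask $T_{n_0^k}$ for any total order; this is reconciled by reindexing $[n_0]^k$ via a linear order refining the componentwise one and treating the excess mask as an error term.

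Part~(ii) applies the identical bootstrap once a symmetric seed is in hand, since tensor powers of symmetric matrices remain symmetric. The main obstacle, and the hardest step of the entire proof, is producing such a symmetric $A_0$ with ratio exceeding $\tfrac{\pi}{2} K_G^2\approx 5$. Naive symmetrizations fail: the block matrix $\bigl(\begin{smallmatrix}0 & A_0 \\ A_0^T & 0\end{smallmatrix}\bigr)$, for instance, has triangular cut equal to $A_0^T$ in the lower-left block and gives only a constant ratio regardless of $A_0$, because this symmetrization already factors the triangular structure through the block decomposition. One is therefore forced to construct a symmetric witness intrinsically --- a carefully chosen symmetric Toeplitz or Hilbert-type matrix, guided by the asymmetric bounds of \cite{bennett1977} --- and verify its ratio in the $(\infty,1)$-norm by direct calculation.

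For part~(iii) I would take symmetric Toeplitz witnesses. The key algebraic fact is that the triangular cut of a symmetric Toeplitz matrix with Fourier symbol $f$ is a lower-triangular Toeplitz matrix with symbol $P_+f$, the Riesz projection of $f$. Since the $n\times n$ Toeplitz operator norm approaches the $L^\infty$ (respectively $H^\infty$) norm of its symbol as $n\to\infty$, choosing a sequence of real even symbols $f_n$ with $\|f_n\|_\infty$ uniformly bounded but $\|P_+f_n\|_\infty\to\infty$ --- possible by the classical unboundedness of the Riesz projection on $L^\infty$ --- yields symmetric matrices $S_n\in\cS_n(\R)$ with $\|S_n\|_{\mathrm{opr}}$ bounded and $\|\mathcal{T}_n(S_n)\|_{\mathrm{opr}}\to\infty$, which is the desired conclusion.
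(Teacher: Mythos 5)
There is a genuine gap --- in fact two, and the first is a conceptual error rather than a missing detail. Your tensor-power bootstrap for (i) analyzes the wrong operator: $T_{n_0}^{\otimes k}$ is the mask of the \emph{componentwise} partial order on $[n_0]^k$, whereas $\mathcal{T}_{n_0^k}$ uses the lexicographic (total) order, and the discrepancy is not a small error term. Already for $k=2$ one checks that $T_{n^2}-T_n\otimes T_n$ is the mask $\{i>j,\ k<l\}$, a tensor product of triangular-type masks whose Schur-multiplier norm is exactly the quantity whose growth is at issue; worse, when the seed has zero diagonal the true identity is $T_{n^2}\circ(A\otimes A)=(T_n\circ A)\otimes A$, i.e.\ the lexicographic mask truncates only the \emph{leading} tensor factor, so tensor powers of a fixed seed gain only one factor's worth of truncation and the ratio does not amplify at all. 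Your claimed lower bound $c^k/(\pi K_G^2/2)^{k-1}$ is a bound for $(T_{n_0}\circ A_0)^{\otimes k}$, which is not $\mathcal{T}_{n_0^k}(A_0^{\otimes k})$. You also never exhibit the seed with ratio exceeding $\tfrac{\pi}{2}K_G^2$. The amplification in the paper comes instead from letting the witness itself grow: for $(A_n)_{ij}=(i-j)^{-1}$ ($i\neq j$, zero diagonal) one has $\|A_n\|_{\mathrm{opr}}\leq\pi$ via the Toeplitz symbol $i(\pi-\theta)$, hence $\|A_n\|_{\Box}\leq\pi/n$, while a direct computation gives $\|\mathcal{T}_n(A_n)\|_{\Box}=(n(H_{n-1}-1)+1)/n^2$, so the ratio already grows like $\log n$ with no tensoring. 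The second, and central, gap is part (ii): you correctly identify the production of a symmetric witness as the hardest step and then leave it undone, merely announcing that one must be built ``intrinsically'' and verified ``by direct calculation.'' That claim of necessity is also false, and missing the way around it is missing the heart of the proof: $A_n$ is \emph{antisymmetric}, so $A_n\otimes A_n$ is symmetric, and the zero-diagonal identity $T_{n^2}\circ(A_n\otimes A_n)=(T_n\circ A_n)\otimes A_n$ combined with Proposition~\ref{prop:normineq}(b),(c) transfers the one-factor gain $\|T_n\circ A_n\|_{\mathcal{B}(\ell^\infty,\ell^1)}\gtrsim n\log n$ against $\|A_n\otimes A_n\|_{\mathcal{B}(\ell^\infty,\ell^1)}\lesssim\|A_n\|_{\mathcal{B}(\ell^\infty,\ell^1)}^2$ into a cut-norm ratio of order $\log n$ for a symmetric matrix.

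Part (iii) is your strongest piece and is a genuinely different route from the paper, which again uses $A_n\otimes A_n$ together with $\|A\otimes B\|_{\mathrm{opr}}=\|A\|_{\mathrm{opr}}\|B\|_{\mathrm{opr}}$ and the hands-on estimate $\|T_n\circ A_n\|_{\mathrm{opr}}\geq\tfrac{4}{5}\log n$. But even here the key existence claim is only asserted: unboundedness of the Riesz projection on $L^\infty$ hands you \emph{some} bounded symbol with unbounded analytic part, not a \emph{real, even} one --- the standard witness $\sum_{k\neq 0}e^{ik\theta}/k=i(\pi-\theta)$ is odd and yields precisely the antisymmetric $A_n$, so the symmetry obstruction reappears. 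A witness does exist, e.g.\ $(S_n)_{ij}=\sin((i-j)\alpha)/(i-j)$ for $i\neq j$, whose even symbol $2\sum_{k\geq 1}\sin(k\alpha)\cos(k\theta)/k$ is bounded while its analytic part has a logarithmic singularity at $\theta=\alpha$; supplying that construction, plus a quantitative lower bound for the finite sections, is what a complete proof of (iii) along your lines must contain.
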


\begin{proof}
Consider the $n \times n$ matrix $A_n$ defined as below:
\begin{equation}\label{eq:an-matrix}
    (A_n)_{ij} = \begin{cases}
    0 & i = j \\
    (i-j)^{-1} & i \neq j
    \end{cases}.
\end{equation}
By Proposition \ref{prop:normineq} \ref{lem:cutnormopnorm}, we know that $n\|A_n\|_{\Box} \leq \|A_n\|_{\text{opr}}$. To bound $\|A_n\|_{\text{opr}}$, one can represent $A_n$ as the ``upper corner'' of a Toeplitz operator $T$ on $\ell^2$ given by $T(e_n) = \sum_{k=1}^{n-1}\frac{1}{n-k}e_k + \sum_{k=n+1}^{\infty}\frac{1}{n-k}e_k$, where $\{e_n\}_n$ forms the canonical basis of $\ell^2(\N)$. $T$ can be viewed as the operator $T_f$ on $H^2$ (the space of holomorphic functions on the complex unit disk with finite 2-norm), where $f(e^{i\theta})= i(\pi-\theta)$, and a standard theroem of Toeplitz operators \cite[p. 179]{Douglas1998} can be used to show that $\|T_f\|_{\text{opr}}= \sup|f(e^{i\theta})| = \pi$. Thus, $\|T\|_{\text{opr}}= \pi$, showing that $\|A_n\|_{\text{opr}} \leq \pi$, and so it must be that $\|A_n\|_{\Box} \leq \frac{\pi}{n}$. Calculating $\|\mathcal{T}_n(A_n)\|_{\Box}$ is simple, as the matrix is strictly positive above the diagonal, so we get that 
\begin{equation}\label{eq:boxnormtan}
    \|\mathcal{T}_n(A_n)\|_{\Box} = \frac{1}{n^2}\sum_{1 \leq i<j \leq n} (A_n)_{ij} = \frac{1}{n^2}\sum_{i = 0}^{n-1} \frac{i}{(n-i)} =\frac{n(H_{n-1}-1)+1}{n^2},
\end{equation}
where $H_n$ is the $n$-th harmonic number. This last equality can be shown through induction; it clearly holds true for $n=2$, and we note that 
\begin{equation*}
    \sum_{i=0}^{n} \frac{i}{n+1-i} = \sum_{i = 0}^{n-1} \frac{i}{(n-i)} +H_n 
    = n(H_{n-1}-1)+1 +H_n 
    = (n+1)H_n-n,
\end{equation*}
proving the claim. Thus we have 
\begin{equation*}
    \sup_{A \in M_n(\R)} \frac{\|\mathcal{T}_n(A)\|_{\Box}}{\|A\|_{\Box}} \geq \frac{\|\mathcal{T}_n(A_n)\|_{\Box}}{\|A_n\|_{\Box}} \geq \frac{n^2(H_{n-1}-1)+n}{n^2} \to \infty,
\end{equation*}
proving (i).

To prove (ii), consider again $A_n$ as defined in \eqref{eq:an-matrix}, and note that $A_n \otimes A_n$ is a symmetric matrix.
Using  Proposition \ref{prop:normineq} \ref{prop:normineq1} and \ref{prop:normineq2}, we have
\begin{eqnarray*}
    n^4\|A_n \otimes A_n\|_{\Box} &\leq& \|A_n \otimes A_n\|_{\mathcal{B}(\ell^\infty,\ell^1)} 
    \ \leq\  \frac{\pi}{2}K_G^2\|A_n\|_{\mathcal{B}(\ell^\infty,\ell^1)}\|A_n\|_{\mathcal{B}(\ell^\infty,\ell^1)} \\
    &\leq& 2\pi K_G^2n^2\|A_n\|_{\Box}\|A_n\|_{\mathcal{B}(\ell^\infty,\ell^1)} 
   \  \leq\  2\pi^2 K_G^2n\|A_n\|_{\mathcal{B}(\ell^\infty,\ell^1)}.
\end{eqnarray*}
On the other hand, by Proposition \ref{prop:normineq} \ref{prop:normineq1},
\begin{eqnarray*}
    4n^4\|T_{n^2} \circ (A_n \otimes A_n)\|_{\Box} &\geq& \|T_{n^2} \circ (A_n \otimes A_n)\|_{\mathcal{B}(\ell^\infty,\ell^1)} 
    = \|(T_n \circ A_n) \otimes A_n\|_{\mathcal{B}(\ell^\infty,\ell^1)} \\
    &\geq& \|T_n \circ A_n\|_{\mathcal{B}(\ell^\infty,\ell^1)}\|A_n\|_{\mathcal{B}(\ell^\infty,\ell^1)} 
    \geq n^2\|T_n \circ A_n\|_{\Box}\|A_n\|_{\mathcal{B}(\ell^\infty,\ell^1)} \\
    &=& (n(H_{n-1}-1)+1)\|A_n\|_{\mathcal{B}(\ell^\infty,\ell^1)},
\end{eqnarray*}
where the last equality was shown in \eqref{eq:boxnormtan}. 
Thus, 
\begin{equation*}
    \sup_{A \in \mathcal{S}_{n^2}} \dfrac{\|\mathcal{T}_{n^2}(A)\|_{\Box}}{\|A\|_{\Box}} \geq \frac{\|\mathcal{T}_{n^2}(A_n \otimes A_n)\|_{\Box}}{\|A_n\otimes A_n\|_{\Box}} \geq \frac{n(H_{n-1}-1)+1}{8\pi^2K_G^2n} \to \infty,
\end{equation*}
proving (ii). 

To prove (iii), we make use of the well-known fact that for any matrices $A,B$, $\|A \otimes B\|_{\text{opr}} = \|A\|_{\text{opr}}\|B\|_{\text{opr}}$, as well as once again considering the symmetric matrix $A_n \otimes A_n$. It is clear that $\|A_n \otimes A_n\|_{\text{opr}} = \|A_n\|_{\text{opr}}\|A_n\|_{\text{opr}} \leq \pi\|A_n\|_{\text{opr}}$, where $\|A_n\|_{\text{opr}} \leq \pi$ by the same Fourier analytic argument used in the proof of (i). On the other hand,
\begin{equation*}
    \|T_{n^2} \circ (A_n \otimes A_n)\|_{\text{opr}} = \|(T_n \circ A_n) \otimes A_n\|_{\text{opr}} 
    = \|T_n \circ A_n\|_{\text{opr}}\|A_n\|_{\text{opr}} 
    \geq \frac{4}{5} \log n\|A_n\|_{\text{opr}},
\end{equation*}
where $\|T_n \circ A_n\|_{\text{opr}} \geq \frac{4}{5} \log n$ can be shown by considering $T_n \circ A_n$ applied to the vector $(n-1)^{-1/2}\sum_{k=2}^ne_k.$ Thus we have 
\begin{equation*}
    \sup_{A \in \mathcal{S}_{n^2}} \dfrac{\|\mathcal{T}_{n^2}(A)\|_{\text{opr}}}{\|A\|_{\text{opr}}} \geq \frac{\|\mathcal{T}_{n^2}(A_n \otimes A_n)\|_{\text{opr}}}{\|A_n\otimes A_n\|_{\text{opr}}} \geq \frac{4}{5\pi} \log n \to \infty,
\end{equation*}
proving (iii).
\end{proof}
This result can easily be shown to imply unboundedness of $M_{\chi}$ on bounded graphons.
\begin{corollary}\label{cor:mchi-unbounded}
Let $M_{\chi}$ be as defined in \eqref{eq:mchi}. Then for all $c > 0$, there exists a graphon $w\in \W^{\infty}$ such that $\|M_{\chi}(w)\|_{\Box} > c\|w\|_{\Box}.$ In other words, $\|M_{\chi}\|_{\text{opr}} = \infty$ on the space $(\W^{\infty},\|\cdot\|_{\Box})$. Furthermore, as $(\W^{\infty},\|\cdot\|_{\Box}) \subset (\W^p,\|\cdot\|_{\Box})$ for all $1 \leq p < \infty$, $M_{\chi}$ acting on ($\W^p,\|\cdot\|_{\Box})$ is also an unbounded operator.
\end{corollary}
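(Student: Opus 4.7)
The plan is to lift the matrix example from Proposition~\ref{prop:unboundedtn}(ii) into the world of graphons via a step-function construction. Fix $c > 0$; by part (ii) there exists $N$ and a symmetric matrix $B = A_N \otimes A_N \in \mathcal{S}_{N^2}(\mathbb{R})$ with $\|\mathcal{T}_{N^2}(B)\|_\Box / \|B\|_\Box > c$. Since every entry of $A_N$ has absolute value at most $1$, $B$ is uniformly bounded, and I would take $w_B$ to be the symmetric step graphon on $[0,1]^2$ built from $B$ by partitioning $[0,1]$ into $N^2$ equal intervals $I_k$ and setting $w_B \equiv B_{ij}$ on $I_i \times I_j$. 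Then $w_B \in \mathcal{W}^\infty$, so the same $w_B$ automatically lies in $\mathcal{W}^p$ for every $1 \leq p < \infty$.

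The two ingredients I would then verify are (a) $\|w_B\|_\Box = \|B\|_\Box$ and (b) $\|M_\chi(w_B)\|_\Box = \|\mathcal{T}_{N^2}(B)\|_\Box$. Ingredient (a) is a standard consequence of the equivalent form of the matrix cut norm mentioned in the proof of Proposition~\ref{prop:normineq}\ref{prop:normineq1}: for any measurable $S,T \subseteq [0,1]$, the quantities $x_i = N^2\lambda(S \cap I_i)$ and $y_j = N^2\lambda(T \cap I_j)$ lie in $[0,1]$ and realize $\int_{S \times T} w_B$ as $\frac{1}{N^4}\sum_{i,j} B_{ij} x_i y_j$, so the supremum over measurable $S,T$ coincides with the matrix cut norm. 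For (b), the diagonal cells $I_i \times I_i$ play no role since the diagonal entries of $A_N$ (and hence of $B$) vanish; on every off-diagonal cell $\chi$ is identically $0$ or identically $1$, so $\chi w_B$ is again a step function whose values form the strictly upper-triangular part of $B$, and applying the argument of (a) to this new matrix gives (b), after one uses transposition-invariance of the matrix cut norm to match the upper-triangular part of $B$ with $\mathcal{T}_{N^2}(B)$.

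Combining (a) and (b) yields $\|M_\chi(w_B)\|_\Box / \|w_B\|_\Box > c$, which proves both the $\mathcal{W}^\infty$ and the $\mathcal{W}^p$ statements at once. The only point that requires any care is the direction mismatch between $\chi$ (which retains $x \leq y$) and $T_n$ (which retains $i \geq j$); this is resolved cleanly because $B$ is symmetric with zero diagonal, so its strictly upper- and lower-triangular parts are transposes of each other and share the same cut norm. No analytic ingredient beyond Proposition~\ref{prop:unboundedtn}(ii) is required.
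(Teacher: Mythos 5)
Your proposal is correct and follows essentially the same route as the paper: both lift $A_N\otimes A_N$ to a symmetric step graphon on an $N^2$-part partition of $[0,1]$ and transfer the cut-norm ratio from Proposition~\ref{prop:unboundedtn}(ii). You merely supply the details (the fractional form of the matrix cut norm for step functions, and the transposition argument reconciling $\chi$, which keeps $x\leq y$, with $T_{n^2}$, which keeps $i\geq j$) that the paper's proof leaves as ``clear.''
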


\begin{proof}
Let $c > 0$ be fixed. Then by Proposition \ref{prop:unboundedtn} (ii) there exists an $n \in \N$ such that $\|\mathcal{T}_{n^2}(A_n\otimes A_n)\|_{\Box} > c\|A_n\otimes A_n\|_{\Box}$, where $A_n$ is the matrix defined in \eqref{eq:an-matrix}. Let $I_i = [\frac{i-1}{n^2},\frac{i}{n^2})$ for $1 \leq i \leq n^2$ and define the graphon $w_n$ as
\begin{equation}\label{eq:tensor-graphon}
    w_n(x,y) = \begin{cases}
    (A_n\otimes A_n)_{ij} & (x,y) \in I_i \times I_j,\quad i\neq j \\
    0 & (x,y) \in I_i \times I_i
    \end{cases}
\end{equation}
It is clear that $\|M_{\chi}(w_n)\|_{\Box} =\|\mathcal{T}_{n^2}(A_n\otimes A_n)\|_{\Box}$ and that $\|w_n\|_{\Box} = \|A_n\otimes A_n\|_{\Box}$. Thus, it must be that $\|M_{\chi}(w_n)\|_{\Box} > c\|w_n\|_{\Box}$, proving the claim and showing that $M_{\chi}$ is unbounded on $(\W^{\infty},\|\cdot\|_{\Box}).$ Furthermore, as $w_n \in \W^p$ for all $p \geq 1$, it is clear that $M_{\chi}$ is also unbounded on $(\W^p,\|\cdot\|_{\Box})$ for all such $p$.
\end{proof}

\begin{remark}
In \cite{Chuangpishit_2015}, the authors prove that for graphons $w$ such that $w:[0,1]^2\to [-2,2]$, $\|w\chi\|_{\Box} \leq 2\sqrt{\|w\|_{\Box}}$. However, we must be careful to note that their result does not imply continuity of $M_{\chi}$ on $(\W^{\infty},\|\cdot\|_{\Box})$, as this would imply that the square root could be ``dropped''. Clearly this cannot happen due to Corollary \ref{cor:mchi-unbounded}, and so we make note that this result can only be extended to bounded subsets of $\W^{\infty}$. This is because following the authors in \cite{Chuangpishit_2015}'s proof, we get that $\|w\chi\|_{\Box} \leq \|w\|_{\infty}\sqrt{\|w\|_{\Box}}$, showing that this technique fails for $\W^{\infty}$ in general.
\end{remark}
\section{The banded cut}
The triangular cut sends matrices to their upper triangular part, a subclass of matrices that are of much mathematical interest. Another family of matrices that receive attention are the banded matrices: these are matrices $A$ such that there exists a constant $k \in \N$ where $A_{i,j} = 0$ if $|i-j|>k$. Banded matrices are useful in fast computations of numerical approximations, such as the finite element method as applied to certain partial differential equations \cite[Section 1.9]{hughes2000}. In this section, we use our results on the triangular cut on graphons to show that the ``banded'' cut on graphons is also unbounded. We begin by defining the banded cut below.
\begin{definition}[Banded cut on graphons]\label{def:banded-oprs}
Let $0 < \lambda < 1$ and let the function $\chi_{\lambda}:[0,1]^2\to [0,1]$ be defined as 
\begin{equation*}
    \chi_{\lambda}(x,y)= \begin{cases}
    1 & |x-y| \leq 1-\lambda\\
    0 & \text{else},
    \end{cases}
\end{equation*}
and define the banded cut on graphons as 
\begin{equation*}
    B_{\lambda}: \W^p \to \W^p, \quad w\mapsto w\chi_{\lambda}.
\end{equation*}
\end{definition}
One can view this cut as snipping off two triangles of equal size from opposite edges of the unit square. We now introduce a lemma that will prove useful for our proof of the unboundedness of the banded cut.

\begin{figure}[h]
\centering
    \includegraphics[width=0.35\textwidth]{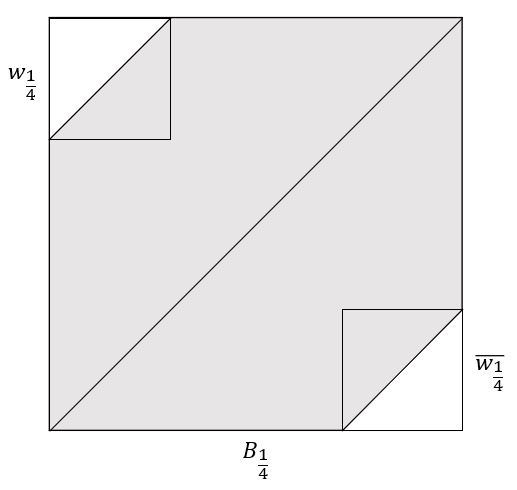}
    \caption{The operator $B_{\frac{1}{4}}$ acting on the function $w_{\frac{1}{4}}+\overline{w}_{\frac{1}{4}}$.}\label{fig:fig3}
\end{figure}

\begin{lemma}\label{lem:cutnormtrans}
Let $w \in L^p([0,1]^2)$ and for $0 < \lambda \leq \frac{1}{2}$, define the two functions $w_{\lambda},\overline{w}_{\lambda} \in L^p([0,1]^2)$ as
\begin{align*}
    w_{\lambda}(x,y) &= \begin{cases}
    w(x/\lambda, y/\lambda - (1-\lambda)/\lambda) & (x,y) \in [0,\lambda]\times[1-\lambda,1]\\
    0 & \text{else}
    \end{cases}\\
    \overline{w}_{\lambda}(x,y) &= w_{\lambda}(y,x).
\end{align*}
Then $\|w_{\lambda}+\overline{w}_{\lambda}\|_{\Box} = 2\lambda^2\|w\|_{\Box}$.
\end{lemma}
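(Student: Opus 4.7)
The plan is to exploit the disjointness of the supports of $w_\lambda$ and $\overline{w}_\lambda$ together with a symmetric choice of test sets. Since $\lambda\le 1/2$, the supports $[0,\lambda]\times[1-\lambda,1]$ and $[1-\lambda,1]\times[0,\lambda]$ intersect in at most a null set, so for any measurable $S,T\subseteq[0,1]$ the integral $\int_{S\times T}(w_\lambda+\overline{w}_\lambda)$ splits into a contribution from each support separately.

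On each rectangular support I would apply the affine change of variables that unscales it to $[0,1]^2$. This contributes a Jacobian of $\lambda^2$ and yields
\[
\int_{S\times T} w_\lambda = \lambda^2 \int_{S_1\times T_1} w,\qquad \int_{S\times T}\overline{w}_\lambda = \lambda^2 \int_{T_2\times S_2} w,
\]
where $S_1=\frac{1}{\lambda}(S\cap[0,\lambda])$, $T_1=\frac{1}{\lambda}(T\cap[1-\lambda,1]-(1-\lambda))$, $S_2=\frac{1}{\lambda}(S\cap[1-\lambda,1]-(1-\lambda))$, and $T_2=\frac{1}{\lambda}(T\cap[0,\lambda])$; the coordinate swap in the second expression reflects the identity $\overline{w}_\lambda(x,y)=w_\lambda(y,x)$. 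The upper bound $\|w_\lambda+\overline{w}_\lambda\|_\Box\le 2\lambda^2\|w\|_\Box$ then follows immediately from the triangle inequality, since each of the two pieces is bounded in absolute value by $\lambda^2\|w\|_\Box$.

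For the matching lower bound, fix $\epsilon>0$ and choose $S^*,T^*\subseteq[0,1]$ with $\bigl|\int_{S^*\times T^*}w\bigr|>\|w\|_\Box-\epsilon$. The key step is to pack both $S^*$ (rescaled into $[0,\lambda]$) and $T^*$ (rescaled into $[1-\lambda,1]$) into a single set, taking
\[
S = T = \lambda S^* \;\cup\; \bigl((1-\lambda)+\lambda T^*\bigr).
\]
Reading off the four subsets above with this choice gives $S_1=T_2=S^*$ and $T_1=S_2=T^*$, so the two pieces coincide and
\[
\int_{S\times T}(w_\lambda+\overline{w}_\lambda) = 2\lambda^2 \int_{S^*\times T^*} w,
\]
whose absolute value exceeds $2\lambda^2(\|w\|_\Box-\epsilon)$. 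Sending $\epsilon\to 0$ gives the reverse inequality, and equality follows.

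The one point requiring care is the lower bound: the two pieces of the integral must be made large \emph{simultaneously} and with \emph{matching} signs, since otherwise the triangle inequality for the upper bound would be slack. The symmetric choice $S=T$ is engineered precisely so that both pieces evaluate to the same quantity $\lambda^2\int_{S^*\times T^*}w$, automatically resolving the sign issue. This is where the relation $\overline{w}_\lambda(x,y)=w_\lambda(y,x)$ is used in an essential way.
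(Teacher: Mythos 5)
Your proof is correct and follows essentially the same route as the paper's: the upper bound via the triangle inequality on the disjointly supported pieces, and the lower bound by testing against a union of the two rescaled optimal rectangles so that the cross terms vanish. Your symmetric choice $S=T$ makes the sign-matching of the two contributions explicit (and your $\epsilon$-approximation avoids assuming the supremum is attained), which is a slightly cleaner handling of a point the paper's proof passes over implicitly, but the underlying idea is identical.
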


\begin{proof}
We first consider the following upper bound
\begin{align*}
    \|w_{\lambda}+\overline{w}_{\lambda}\|_{\Box} &\leq \|w_{\lambda}\|_{\Box}+\|\overline{w}_{\lambda}\|_{\Box} \\
    &= \lambda^2\|w\|_{\Box}+\lambda^2\|w\|_{\Box} \\
    &= 2\lambda^2\|w\|_{\Box},
\end{align*}
noting that this holds true as $w_{\lambda}$ and $\overline{w}_{\lambda}$ are just copies of $w$ ``shrunken'' by a factor of $(1-\lambda)^2$. For a lower bound, let $S_1 \times T_1 \subseteq [0,\lambda]\times[1-\lambda,1]$ and let $S_2 \times T_2 \subseteq [1-\lambda,1]\times [0,\lambda].$ We make note of the fact that
\begin{align}\label{eq:flipflop-lowerbnd}
    \|w_{\lambda}+\overline{w}_{\lambda}\|_{\Box} &\geq \bigg|\iint_{(S_1\cup S_2)\times(T_1\cup T_2)}w_{\lambda}+\overline{w}_{\lambda}\bigg| \nonumber\\
    &= \bigg|\iint_{S_1\times T_1}w_{\lambda}+\overline{w}_{\lambda} + \iint_{S_1\times T_2}w_{\lambda}+\overline{w}_{\lambda} + \iint_{S_2\times T_1}w_{\lambda}+\overline{w}_{\lambda} + \iint_{S_2\times T_2}w_{\lambda}+\overline{w}_{\lambda}\bigg| \nonumber\\
    &= \bigg|\iint_{S_1\times T_1}w_{\lambda}+\overline{w}_{\lambda}+\iint_{S_2\times T_2}w_{\lambda}+\overline{w}_{\lambda}\bigg| = \bigg|\iint_{S_1\times T_1}w_{\lambda}+\iint_{S_2\times T_2}\overline{w}_{\lambda}\bigg|,
\end{align}
as $w_{\lambda}+\overline{w}_{\lambda} = 0$ on both $S_1 \times T_2$ and $S_2 \times T_1$, $w_{\lambda} =0$ on $S_2\times T_2$, and $\overline{w}_{\lambda} = 0$ on $S_1\times T_1$. Letting $S_1 \times T_1$ and $S_2 \times T_2$ be the sets on which $\|w_{\lambda}\|_{\Box}$ and $\|\overline{w}_{\lambda}\|_{\Box}$ are respectively achieved, and noting that $\|w_{\lambda}\|_{\Box}=\|\overline{w}_{\lambda}\|_{\Box} = \lambda^2\|w\|_{\Box}$, \eqref{eq:flipflop-lowerbnd} implies that 
\begin{equation*}
    \|w_{\lambda}+\overline{w}_{\lambda}\|_{\Box} \geq \|w_{\lambda}\|_{\Box}+\|\overline{w}_{\lambda}\|_{\Box} =2\lambda^2\|w\|_{\Box},
\end{equation*}
proving the original claim.
\end{proof}
\begin{corollary}\label{cor:banded-unbndd}
Let $1 \leq p \leq \infty$. Then for $0 < \lambda \leq \frac{1}{2}$, $B_{\lambda}$ is an unbounded operator on $(\W^p,\|\cdot\|_{\Box})$. In other words, for any $c > 0$, there exists a graphon $w \in \W^p$ such that $\|B_{\lambda}(w)\|_{\Box} > c\|w\|_{\Box}$. 
\end{corollary}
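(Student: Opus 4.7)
The plan is to leverage Corollary~\ref{cor:mchi-unbounded} together with Lemma~\ref{lem:cutnormtrans} to transfer the unboundedness of the triangular cut to the banded cut. The geometric key is that after rescaling, $B_\lambda$ restricted to the off-diagonal corner $[0,\lambda]\times[1-\lambda,1]$ acts as a triangular cut: for $(x,y)$ in this corner, writing $u = x/\lambda$ and $z = (y-(1-\lambda))/\lambda$ so that $u,z \in [0,1]$, the condition $|x-y|\le 1-\lambda$ defining $\chi_\lambda$ reduces to $u \ge z$, i.e.\ the indicator of the lower triangle.

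Fix $c>0$. By Corollary~\ref{cor:mchi-unbounded}, choose a symmetric graphon $v\in\W^p$ (e.g.\ one of the $w_n$ from that proof, which comes from the symmetric matrix $A_n\otimes A_n$) with $\|M_\chi(v)\|_\Box > c\|v\|_\Box$. Since $v$ is symmetric and the cut norm is invariant under the coordinate swap $(x,y)\mapsto(y,x)$, the lower-triangular cut $g(u,z) := v(u,z)\,\mathbf{1}_{u\ge z}$, which is just the transpose of $M_\chi(v)$, also satisfies $\|g\|_\Box > c\|v\|_\Box$, and clearly $g\in L^p([0,1]^2)$ since $|g|\le|v|$.

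Next, form $f := v_\lambda + \bar v_\lambda$ as in Lemma~\ref{lem:cutnormtrans}; this is symmetric by construction and sits in $L^p$, hence $f\in\W^p$, and Lemma~\ref{lem:cutnormtrans} gives $\|f\|_\Box = 2\lambda^2\|v\|_\Box$. The coordinate change above, combined with the analogous computation on the second corner $[1-\lambda,1]\times[0,\lambda]$, shows that $B_\lambda(v_\lambda) = g_\lambda$ and $B_\lambda(\bar v_\lambda) = \bar g_\lambda$, so $B_\lambda(f) = g_\lambda + \bar g_\lambda$. Applying Lemma~\ref{lem:cutnormtrans} a second time, now with $g$ in place of $w$, yields $\|B_\lambda(f)\|_\Box = 2\lambda^2\|g\|_\Box > 2\lambda^2 c\|v\|_\Box = c\|f\|_\Box$, which is the desired conclusion.

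The main obstacle is purely bookkeeping: verifying the identification $B_\lambda(v_\lambda) = g_\lambda$, i.e.\ that the banded indicator $\chi_\lambda$, pulled back through the rescaling used to define $w_\lambda$, becomes a triangular indicator on the rescaled unit square. The hypothesis $\lambda\le\tfrac12$ is used exactly to guarantee that the two off-diagonal corners are disjoint from one another and that on each corner $|x-y| = \pm(x-y) = |y-x|$ has a well-defined sign, so that $\chi_\lambda$ really does become an affine-linear inequality of the form ``$u\ge z$'' rather than splitting into two cases. Once this identification is in place, the rest of the argument is just two invocations of Lemma~\ref{lem:cutnormtrans}.
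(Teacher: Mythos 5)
Your proof is correct and follows essentially the same route as the paper: both arguments place the test graphon in the two off-diagonal corners via the construction of Lemma~\ref{lem:cutnormtrans}, observe that $B_\lambda$ then acts as a (rescaled) triangular cut on each corner, and apply Lemma~\ref{lem:cutnormtrans} twice; the only difference is that you invoke Corollary~\ref{cor:mchi-unbounded} as a black box while the paper re-runs the quantitative estimates from Proposition~\ref{prop:unboundedtn} with the explicit graphon $w_n$. Your handling of the orientation issue --- that the corner identification produces the \emph{lower}-triangular cut $g = (M_\chi(v))^{T}$ rather than $M_\chi(v)$ itself, which is harmless because $v$ is symmetric and the cut norm is transpose-invariant --- is in fact slightly more careful than the paper's.
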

The idea of the following proof is to make use of the fact that the triangular cut is unbounded on graphons by forcing the banded cut to act like the triangular cut on two graphons at once. For a visual explanation of how this is done, refer to Figure \ref{fig:fig3}.
\begin{proof}
Let $0 < \lambda \leq \frac{1}{2}$ be fixed, let $n \geq 2$, let $w_n$ be the graphon defined in \eqref{eq:tensor-graphon}, and let $A_n$ be the matrix defined in \eqref{eq:an-matrix}. We will now consider the behavior of $B_{\lambda}$ applied to the graphon $(w_n)_{\lambda}+(\overline{w_n})_{\lambda}$, making liberal use of Lemma \ref{lem:cutnormtrans}. We first note that
\begin{equation*}
    \|(w_n)_{\lambda}+(\overline{w_n})_{\lambda}\|_{\Box} = 2\lambda^2\|w_n\|_{\Box} = 2\lambda^2\|A_n \otimes A_n\|_{\Box} \leq \frac{4\lambda^2\pi^2K_G^2}{n^3}\|A_n\|_{\mathcal{B}(\ell^{\infty},\ell^1)},
\end{equation*}
where the last inequality was shown in Proposition \ref{prop:unboundedtn}. Furthermore, we also note that 
\begin{align*}
    \|B_{\lambda}((w_n)_{\lambda}+(\overline{w_n})_{\lambda})\|_{\Box} &= \|(w_n\chi)_{\lambda}+(\overline{w_n\chi})_{\lambda}\|_{\Box} = 2\lambda^2\|w_n\chi\|_{\Box} \\
    &=2\lambda^2\|A_n\otimes A_n\|_{\Box} \geq \frac{\lambda^2(n(H_{n-1}-1)+1)}{2n^4}\|A_n\|_{\mathcal{B}(\ell^{\infty},\ell^1)},
\end{align*}
where the last inequality was also shown in Proposition \ref{prop:unboundedtn}. Thus, we can show that
\begin{equation*}
    \sup_{w \in \W^p} \frac{\|B_{\lambda}(w)\|_{\Box}}{\|w\|_{\Box}} \geq \frac{\|B_{\lambda}(w_n)\|_{\Box}}{\|w_n\|_{\Box}} \geq \frac{n(H_{n-1}-1)+1}{8\pi^2K_G^2n} \to \infty,
\end{equation*}
showing that for $1 \leq p \leq \infty$ and $0 < \lambda \leq \frac{1}{2}$, $\|B_{\lambda}\|_{\text{opr}} = \infty$ on $(\W^p,\|\cdot\|_{\Box})$, proving the corollary.
\end{proof}

In terms of graphs, one can think of the operator $B_{\lambda}$ as deleting a proportion of the edges of a graph equal to $\lambda^2$. Alternatively, one could imagine a banded cut where one removed triangle is larger/smaller than the other. Such an operator could be viewed as deletion of some edges while directing some others, and a similar argument could be used to show the unboundedness of this operation as well. 
\section{Applications to graph limits}

Proposition \ref{prop:unboundedtn} (and Corollary \ref{cor:mchi-unbounded}) can be used to show many interesting results in the realm of graph limits, of which we list a few. We first start by defining some key concepts integral to the study of graph limits.
\begin{definition}[Associated graphon]
Let $G$ be a labelled graph on $n$ vertices and let $A_G$ be the associated adjacency matrix of $G$. Then, the \textit{associated graphon} of $G$ is denoted $w_G$ and defined as:
\begin{equation*}
    w_G(x,y) := (A_G)_{ij} \quad (x,y) \in I_i \times I_j,
\end{equation*}
where $I_i = (\frac{i-1}{n},\frac{i}{n}]$ for $1 \leq i \leq n$.
\end{definition}

\begin{definition}[Convergence of graphs]
A sequence of (dense) labelled graphs $\{G_n\}_{n\geq 1}$ is said to \textit{converge} to a graphon $w \in \W^{\infty}$ if 
\begin{equation*}
    \lim_{n \to \infty}\|w_{G_n}-w\|_{\Box} = 0. 
\end{equation*}
However, if $\|w_{G_n}\|_1/n^2 \to 0$, then the limit of such a sequence will always be the zero graphon. We call such a sequence of graphs \textit{sparse}. To provide a meaningful limit object when considering sparse graph sequences, we normalize each associated graphon by its $L^1$ norm before considering convergence (introduced in \cite{Borgs_2019,Borgs_2018}), noting that a limit object for such a sequence must necessarily not be bounded. Thus, a sparse sequence of labelled graphs $\{H_n\}_{n\geq 1}$ is said to \textit{converge} to a graphon $w \in \W^p$ if 
\begin{equation*}
    \lim_{n \to \infty}\|w_{H_n}/\|w_{H_n}\|_1 - w\|_{\Box} = 0.
\end{equation*}
\end{definition}

\begin{figure}[t]
    \centering 
    \begin{minipage}[b]{0.45\textwidth}
        \centering
        \includegraphics[width=0.7\textwidth]{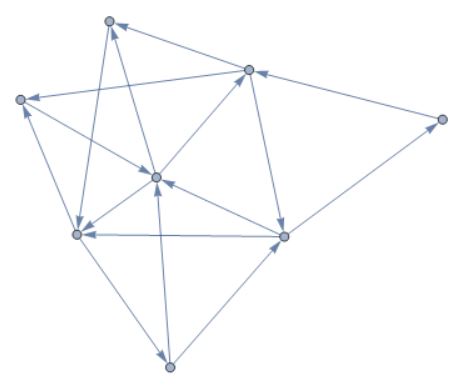} 
    \end{minipage}
    \begin{minipage}[b]{0.45\textwidth}
        \centering
        \includegraphics[width=0.55\textwidth]{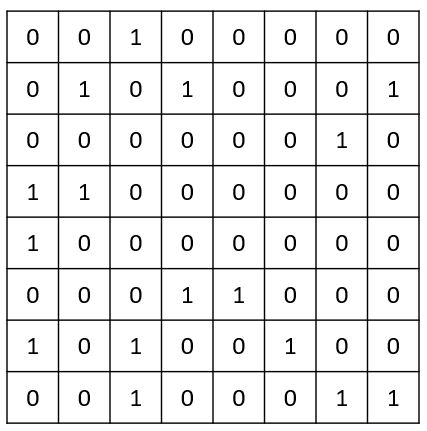}
    \end{minipage}
    \caption{An example of a directed graph $G$ and its associated graphon $w_G$.}\label{fig:fig1}
\end{figure}

The triangular cut applies to graph limits in a way that relates to directed graphs. Limits of sequences of directed graphs, dense or sparse, can be represented as measurable functions $w: [0,1]^2 \to \R$, with the caveat that such functions no longer need to be symmetric. An example of a directed graph and its associated graphon can be found in Figure \ref{fig:fig1}. Directing a graph $G$ by turning an edge $i \sim j$ into $i \to j$ for $i > j$ will act on $A_G$ exactly as the triangular cut would. For an example of such a transformation of a graph, see Figure \ref{fig:fig2}. 

Directing graphs in this way will always be a bounded operation with respect to the cut norm, as associated graphons of simple graphs are always nonnegative and thus for any such $w_G$, we have that $\|w_G\chi\|_{\Box} \leq \frac{1}{2}\|w_G\|_{\Box}.$ Furthermore, for dense graph sequences, this direction scheme does not affect convergence. For any simple graphs $G,H$, we have that $\|(w_G-w_H)\chi\|_{\Box} \leq \sqrt{\|w_G-w_H\|_{\Box}}$ (\cite[Lemma 6.2]{Chuangpishit_2015}), and so convergence of $\{w_{G_n}\}$ implies convergence of $\{w_{G_n}\chi\}$.

However, such results do not hold for sequences of sparse graphs. Given a convergent sequence of sparse graphs $\{G_n\}$, as $\|w_{G_n}\|_1 \to 0$, it must necessarily be the case that $\sup_n \|w_{G_n}/\|w_{G_n}\|_1\|_{\infty} = \infty$. Therefore the upper bound given in \cite{Chuangpishit_2015} is useless when considering the convergence of $\{w_{G_n}\chi\}$, and by Corollary \ref{cor:mchi-unbounded} we have that $\|(w_{G_n}/\|w_{G_n}\|_1-w_{G_m}/\|w_{G_m}\|_1)\chi\|_{\Box}$ can be arbitrarily large compared to $\|w_{G_n}/\|w_{G_n}\|_1-w_{G_m}/\|w_{G_m}\|_1\|_{\Box}$. Thus it is not the case that direction in this fashion is a limit respecting operation for sparse graph sequences. Interestingly, this type of direction is equivalent to making directed acyclic graphs \cite[Proposition 1.4.3]{bang2008digraphs}, and so its potential to affect convergence seems to be a matter of importance.

We shall close the paper with a few open questions of interest to the author.
\begin{question}
Does there exist a sequence of graphons $\{w_n\}_{n\geq 1} \subset \W^p$ such that $w_n$ is convergent to some $w \in \W^p$ but $\{w_n\chi\}_{n \geq 1}$ is not Cauchy with respect to the cut norm?
\end{question}
\begin{question}
Corollary \ref{cor:mchi-unbounded} shows that ``cutting'' a graphon by a triangle is an unbounded operator with respect to the cut norm. Can cut norm bounded operators of such form (i.e. multiplying by a characteristic function of some set) be characterized?
\end{question}

\begin{figure}[h]
    \centering 
    \begin{minipage}[b]{0.45\textwidth}
        \centering
        \includegraphics[width=0.7\textwidth]{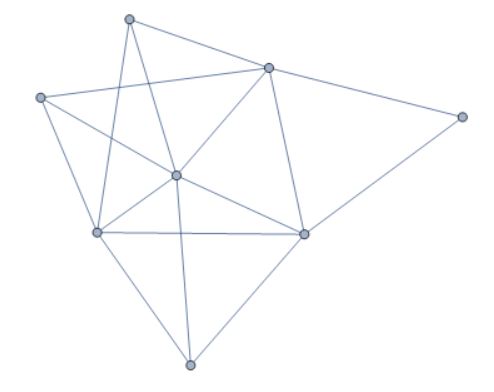} 
    \end{minipage}
    \begin{minipage}[b]{0.45\textwidth}
        \centering
        \includegraphics[width=0.7\textwidth]{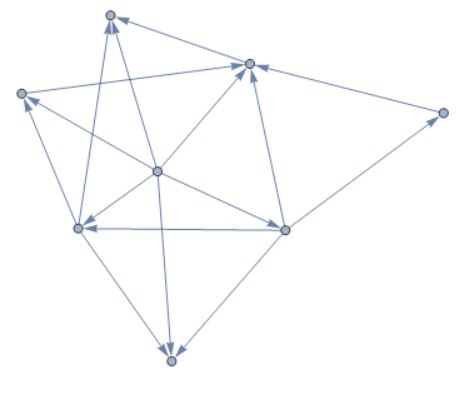}
    \end{minipage}
    \caption{An example of a graph $G$ and a direction assignment corresponding to the triangular cut.}\label{fig:fig2}
\end{figure}
\section{Acknowledgements}
The author would like to acknowledge Mahya Ghandehari and Charli Klein for their help in editing this paper and the Department of Mathematical Sciences at the University of Delaware for their continued support throughout the process of this research.
\printbibliography
\end{document}